\DeclareMathAlphabet\oldmathcal{OMS}        {cmsy}{b}{n}
\SetMathAlphabet    \oldmathcal{normal}{OMS}{cmsy}{m}{n}
\DeclareMathAlphabet\oldmathbcal{OMS}       {cmsy}{b}{n}
\newtheorem{theorem}{Theorem}[section]
\newtheorem{definition}[theorem]{Definition}
\newtheorem{lemma}[theorem]{Lemma}
\newtheorem{crit}[theorem]{Criterion}
\newtheorem{defn-thm}[theorem]{Definition--Theorem}  
\newtheorem{defn-prop}[theorem]{Definition--Proposition} 
\theoremstyle{definition}
\newtheorem{example}[theorem]{Example}
\newtheorem{remark}[theorem]{Remark}
\newtheorem{ack}{Acknowledgments\!\!\!}
\theoremstyle{remark}
\newcommand{\be}{\begin{equation}}
\newcommand{\ea}{\end{array}}\newcommand{\ee}[1]{\label{#1}\end{equation}}
\newcommand{\mult}[0]{\operatorname{mult}}
\newcommand{\hl}{\\\hline}
\def\fract#1#2{\raise4pt\hbox{$ #1 \atop #2 $}}
\def\decdnar#1{\phantom{\hbox{$\scriptstyle{#1}$}}
\left\downarrow\vbox{\vskip15pt\hbox{$\scriptstyle{#1}$}}\right.}
\def\bfa{{\bf a}}
\def\bfw{{\bf w}}
\def\calo{{\mathcal O}}
\def\cald{{\mathcal D}}
\def\calf{{\mathcal F}}
\def\cali{{\mathcal I}}
\def\calo{{\mathcal O}}
\def\cals{{\oldmathcal S}}
\def\calS{{\mathcal S}}
\def\calz{{\oldmathcal Z}}
\def\bbc{{\mathbb C}}
\def\bbn{{\mathbb N}}
\def\bbp{{\mathbb P}}
\def\bbq{{\mathbb Q}}
\def\bbz{{\mathbb Z}}
\def\grg{\gamma}
\def\grl{\lambda}
\def\grz{\zeta}
\def\grD{\Delta}
\def\grG{\Gamma}
\def\tU{\tilde{{U}}}
\def\gsp1{{\mathfrak s}{\mathfrak p}(1)}
\def\la#1{\hbox to #1pc{\leftarrowfill}}
\def\ra#1{\hbox to #1pc{\rightarrowfill}}
\def\Se{Sasaki-Einstein }
\def\BOne{{\mathchoice {\rm 1\mskip-4mu l} {\rm 1\mskip-4mu l}
                          {\rm 1\mskip-4.5mu l} {\rm 1\mskip-5mu l}}}
\def\hook{\mathbin{\hbox to 6pt{%
                 \vrule height0.4pt width5pt depth0pt
                 \kern-.4pt
                 \vrule height6pt width0.4pt depth0pt\hss}}}
\DeclareMathAlphabet\oldmathcal{OMS}        {cmsy}{b}{n}
\SetMathAlphabet    \oldmathcal{normal}{OMS}{cmsy}{m}{n}
\DeclareMathAlphabet\oldmathbcal{OMS}       {cmsy}{b}{n}
\begin{document}
\bibliographystyle{amsalpha}

\title{On Sasaki-Einstein Manifolds in Dimension Five}\thanks{During the preparation of this
work the first author
was partially supported by NSF grant DMS-0504367.}

\author{Charles P. Boyer and Michael Nakamaye}
\address{Department of Mathematics and Statistics,
University of New Mexico, Albuquerque, NM 87131.}

\email{cboyer@math.unm.edu} 
\email{nakamaye@math.unm.edu}

\begin{abstract}
We prove the existence of Sasaki-Einstein metrics on certain simply connected 5-manifolds where until now existence was unknown. All of these manifolds have non-trivial torsion classes. On several of these we show that there are a countable infinity of deformation classes of Sasaki-Einstein structures.
\end{abstract}

\maketitle

\section{Introduction}

Sasakian geometry has recently proven to be a rich source for the production of positive Einstein metrics. In \cite{BG01b} a method for obtaining such Sasaki-Einstein metrics was introduced and developed in much more detail in \cite{BGN02a,BGN02b,BGN03c,BG03,BGK05,BGKT05,BG06b,BoGa05a}. In particular, dimension 5 is quite tractable due the Smale-Barden \cite{Sm62,Bar65} classification of simply connected 5-manifolds. However, it is well-known that any simply connected Sasaki-Einstein manifold must be spin, so Smale's classification of simply connected spin 5-manifolds is what is needed for us. We refer to such manifolds as {\it Smale manifolds}.  It is convenient to divide the Smale manifolds into 3 classes: the simply connected rational homology spheres, including $S^5$; the torsion free Smale manifolds with positive second Betti number, that is the $k$-fold connected sums of $S^2\times S^3$ with $k\geq 1$; and the mixed type, that is, connected sums of torsion free Smale manifolds with simply connected rational homology spheres. 

An important question in Riemannian geometry involves varying notions of positive curvature. In order of decreasing strength they are: positive sectional curvature, positive Ricci curvature, and positive scalar curvature. In particular, it is of much interest to determine which manifolds admit or cannot admit Riemannian metrics of positive Ricci curvature. It follows from the transverse version of the Calabi-Yau theorem \cite{ElK,BGN03a,BGN03b} that positive Sasaki-Seifert structures give Sasakian metrics of positive Ricci curvature. Of course, since any Sasaki-Einstein metric has positive Einstein constant, they automatically give Riemannian metrics of positive Ricci curvature. We are, therefore, interested in the following:
\newline{\bf Problems}:
 
\begin{enumerate}
\item Determine precisely which Smale manifolds admit positive Sasakian structures. 
\item Determine the cardinality of the set of positive Sasaki-Seifert structures on each Smale manifold.  
\item Determine which Smale manifolds admit Sasaki-Einstein metrics. 
\item Determine the cardinality of the set of deformation classes of Sasaki-Einstein structures. 
\end{enumerate}

Note that (3) is a subset of (1) and (4) of (2).
All four problems have been completely solved for $k(S^2\times S^3)$ \cite{BGN03a,BGN03c,Kol04}. In the case of simply connected rational homology spheres, problem (1) has been solved completely \cite{Kol05b}, problem (2) is partially solved, and problem (3) is done with the exception of the manifolds $nM_2$ for $n\geq 2$ \cite{BG06b,Kol05b,BG05}. Moreover, $nM_2$ admits infinitely many positive Sasaki-Seifert structures for each $n>0$ which can be realized by links of weighted homogeneous polynomials (see Theorem 10.2.9 of \cite{BG05}). 
Recall that Koll\'ar \cite{Kol05b} has shown that in order for a simply connected 5-manifold to admit a positive Sasakian structure very stringent conditions on the torsion subgroup of $H_2(M,\bbz)$ must hold. Specifically
\begin{theorem}[Koll\'ar]\label{Kol05}
Let $M$ be a simply connected 5-manifold and suppose that $M$ admits a positive Sasakian structure. Then $H_2(M,\bbz)_{tor}$ must be one of the following groups:
$$\bbz_m^2,\bbz_2^{2n},\bbz_3^4,\bbz_3^6,\bbz_3^8,\bbz_4^4,\bbz_5^4~{\rm with}~n>1~ {\rm and}~m\geq 1.$$
In particular, if $M$ admits a Sasaki-Einstein metric, then $H_2(M,\bbz)_{tor}$ must be one of these groups. (Note that $m=1$ means zero torsion.) 
\end{theorem}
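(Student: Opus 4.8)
The plan is to translate the hypothesis into algebraic geometry via the Seifert correspondence, compute the torsion of $H_2(M,\bbz)$ in terms of the resulting branch data, and then appeal to the geometry of log del Pezzo surfaces to bound that data; the last step is the real content.

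\emph{Step 1: from Sasakian to Seifert data.} By density of the quasi-regular Reeb vectors in the Sasaki cone and openness of positivity, $M$ also carries a quasi-regular positive Sasakian structure. Such a structure exhibits $M$ as a Seifert $S^1$-bundle $\pi\colon M\to X$ over a compact complex two-dimensional orbifold whose coarse moduli space $X$ is a normal projective surface with at worst quotient singularities, and positivity is equivalent to ampleness of the $\bbq$-divisor $-K_X-\Delta$, where $\Delta=\sum_i(1-\tfrac{1}{m_i})D_i$ records the branch divisors $D_i$ of $\pi$ with their ramification indices $m_i\ge 2$. Since $\pi_1(M)=1$, the exact sequence $\pi_1(S^1)\to\pi_1(M)\to\pi_1^{\mathrm{orb}}(X,\Delta)\to 1$ shows $\pi_1^{\mathrm{orb}}(X,\Delta)=1$; in particular $X$ is a simply connected rational surface and the $D_i$ normally generate the orbifold fundamental group.

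\emph{Step 2: torsion of $H_2(M,\bbz)$.} Feed this Seifert data into the homology computation for Seifert bundles over surfaces. Because $X$ is simply connected with rational singularities and $H^2(M,\bbz)$ is torsion-free (as $M$ is a simply connected $5$-manifold), $H_2(M,\bbz)_{tor}$ is built out of $\bigoplus_i H_1(D_i,\bbz_{m_i})\cong\bigoplus_i \bbz_{m_i}^{2g(D_i)}$, the relations coming from the intersection pattern of the $D_i$ and from the Seifert/Euler class; in particular it is a subquotient of $\bigoplus_i \bbz_{m_i}^{2g(D_i)}$. The qualitative upshot is that only branch divisors of positive genus contribute torsion, and a component $D_i$ contributes only a group annihilated by $m_i$.

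\emph{Step 3: bounding the positive-genus branch locus.} This is the heart of the argument, and the main obstacle. Intersecting the ample divisor $-K_X-\Delta$ with a branch component $D_i$ and using adjunction $(K_X+D_i)\cdot D_i=2g(D_i)-2$ gives
\[
2-2g(D_i)+\tfrac{1}{m_i}D_i^2 \;>\; \sum_{j\ne i}\bigl(1-\tfrac{1}{m_j}\bigr)\,D_i\cdot D_j\;\ge\;0.
\]
Combined with the Hodge index theorem and the fact that $X$ is a rational surface with controlled $K_X^2$, this should force: a branch curve of genus $\ge 2$ must have $m_i=2$ and then essentially exhausts $\Delta$; several elliptic branch curves sharing a common multiplicity $m$ can coexist only for small $m$, with their number bounded by a function of $m$; and mixed-multiplicity configurations are dominated by the uniform ones. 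Carrying out the resulting case analysis — enumerating the admissible configurations of curves of genus $\ge 1$ on log del Pezzo surfaces subject to ampleness of $-K_X-\Delta$ — should yield exactly a single elliptic curve with arbitrary $m$ (hence $\bbz_m^2$), the multiplicity-$2$ families consisting of a genus-$n$ curve or of $n$ disjoint elliptic curves (hence $\bbz_2^{2n}$, $n\ge 1$), and the sporadic configurations at $m=3,4,5$ giving $\bbz_3^4,\bbz_3^6,\bbz_3^8,\bbz_4^4,\bbz_5^4$. Reading off $H_2(M,\bbz)_{tor}$ via Step 2 in each case produces the stated list.

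\emph{Step 4: the Sasaki--Einstein case.} The final assertion is immediate: a Sasaki--Einstein metric has positive Einstein constant, so its transverse K\"ahler--Einstein structure has positive basic first Chern class; hence the underlying Sasakian structure is positive and the first part of the theorem applies verbatim.
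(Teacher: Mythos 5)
This statement is not proved in the paper at all: it is quoted verbatim as Koll\'ar's theorem from \cite{Kol05b}, so there is no in-paper argument to compare with. Judged on its own terms, your proposal reproduces the general strategy of Koll\'ar's original proof (quasi-regularize, pass to a Seifert $S^1$-bundle over a log del Pezzo orbifold $(X,\Delta)$, compute $H_2(M,\bbz)_{tor}$ from the branch data, then classify the admissible branch configurations), but it has a genuine gap exactly where you acknowledge ``the heart of the argument'' lies. Step 3 is only asserted: phrases like ``should force'' and ``should yield exactly'' stand in for the entire case analysis that produces the specific list $\bbz_m^2,\bbz_2^{2n},\bbz_3^4,\bbz_3^6,\bbz_3^8,\bbz_4^4,\bbz_5^4$. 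Worse, the intermediate claims you do state are inconsistent with that list: you assert that a branch curve of genus $\ge 2$ forces $m_i=2$, yet the sporadic groups $\bbz_3^4,\bbz_4^4,\bbz_5^4$ arise precisely from genus-$2$ branch curves with $m=3,4,5$; and the suggested configuration of $n$ disjoint elliptic curves of multiplicity $2$ is exactly the kind of possibility that must be ruled out (Koll\'ar shows a positive Sasakian structure admits at most one non-rational branch divisor, a fact this paper itself invokes via Proposition 6.3 of \cite{Kol05b}). The adjunction-plus-Hodge-index inequality you write down is a correct starting point but nowhere near sufficient; Koll\'ar's classification requires a substantially finer analysis of curves of positive genus on log del Pezzo surfaces with the ampleness of $-(K_X+\Delta)$.

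Step 2 is also weaker than what the theorem needs. Saying $H_2(M,\bbz)_{tor}$ is ``a subquotient of'' $\bigoplus_i\bbz_{m_i}^{2g(D_i)}$ does not pin the torsion to the stated list, since subquotients include groups of odd $\bbz_m$-rank and other groups not on it. What is required (and what Koll\'ar proves, and what this paper records as its Equation (\ref{Koltor})) is the precise isomorphism $H_2(M,\bbz)\cong\bbz^k\oplus\bigoplus_i(\bbz_{m_i})^{2g(D_i)}$ for smooth simply connected Seifert bundles; without that equality, even a complete Step 3 would not yield the theorem as stated. Step 1 and Step 4 are fine, but as it stands the proposal is an outline of Koll\'ar's proof with its two decisive ingredients (the exact torsion formula and the classification of branch configurations) missing or misstated.
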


Until now what is known about the existence of Sasaki-Einstein metrics as well as positive Ricci curvature Sasakian metrics on Smale manifolds appears in Chapters 10 and 11 of \cite{BG05}. In fact, Corollary 11.4.14 of \cite{BG05} gives a list of all Smale manifolds that can possibly admit a Sasaki-Einstein metric, and indicates those for which existence is known. It is the purpose of this note to present some new results on the existence of Sasaki-Einstein metrics on simply connected 5-manifolds. In particular, we show that the answer to parts (iv) and (v) of Question 11.4.1 of \cite{BG05} is in the affirmative, but so far we have been unable to determine all the manifolds that can occur. Specifically, we prove

\newtheorem*{named}{\bf Theorem A}
\begin{named}\label{rhsSE}
{\it We have:
\begin{enumerate}
\item The rational homology spheres 
$$2M_2, \quad 3M_2, \quad 5M_2, \quad 6M_2, \quad 7M_2$$
admit Sasaki-Einstein metrics.
\item {\it There exist Sasaki-Einstein metrics on the mixed Smale manifolds
$$M_\infty\#M_3, ~M_\infty\#M_4, ~M_\infty\#M_5, ~M_\infty\#M_6, ~M_\infty\#M_7,~M_\infty\#2M_3,M_\infty\#3M_3,$$ 
$$~2M_\infty\#M_2,~2M_\infty\#M_3, ~2M_\infty\#M_4, ~2M_\infty\#M_5,~3M_\infty\#M_2,~3M_\infty\#M_3,~3M_\infty\#M_4,$$
$$4M_\infty\#M_2, ~4M_\infty\#M_3, ~4M_\infty\#2M_2, ~5M_\infty\#M_2, ~5M_\infty\#2M_2, ~6M_\infty\#M_2.$$
\item Moreover, the manifolds $2M_3,M_\infty\#M_3,M_\infty\#2M_3$ and $3M_\infty\#M_3$ admit a countable infinity of Sasaki-Seifert structures each having Sasaki-Einstein metrics}.
\end{enumerate}}
\end{named}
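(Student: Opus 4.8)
The strategy is the one developed in \cite{BGN03a,BGN03c,BGK05,BG05,BG06b}: realize each of the listed $5$-manifolds as the link $L_f=f^{-1}(0)\cap S^7$ of a well-chosen weighted-homogeneous polynomial $f\in\c[z_0,z_1,z_2,z_3]$ with weight vector $\bfw=(w_0,w_1,w_2,w_3)$ and degree $d$, equip $L_f$ with its natural quasi-regular Sasakian structure whose leaf space is the quasi-smooth, well-formed hypersurface $X_d\subset\p(\bfw)$, and then invoke the sufficient condition for a compatible Sasaki--Einstein metric. Concretely, $L_f$ admits a Sasaki--Einstein metric provided $X_d$ is Fano, i.e. $d<|\bfw|:=w_0+w_1+w_2+w_3$, and the orbifold log canonical threshold satisfies
\[
\operatorname{lct}(X_d)\;>\;\frac{n}{n+1}\;=\;\frac{2}{3},\qquad n=\dim_{\c}X_d=2,
\]
the latter being the transverse continuity-method / Monge--Amp\`ere input (Tian's $\alpha$-invariant criterion in the orbifold setting), combined with the Demailly--Koll\'ar and Johnson--Koll\'ar technology for estimating log canonical thresholds of quasi-smooth weighted hypersurface singularities. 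Thus the proof decomposes into three tasks: (a)~produce the weight data $(\bfw,d)$, and when necessary a specific $f$; (b)~identify the underlying smooth $5$-manifold via Smale's classification; (c)~verify the $\alpha$-invariant bound.

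For (b), from $(\bfw,d)$ one computes $H_2(L_f,\z)$ — both its free rank $b_2$ and its torsion — from the Milnor--Orlik/Randell description of the homology of a link (equivalently from the Alexander polynomial, or from $c_1^{\mathrm{orb}}$ together with the exponents), and one checks that $L_f$ is simply connected and spin; Smale's theorem then pins down the diffeomorphism type. In the rational-homology-sphere cases $pM_2$ one needs links with $b_2=0$ and $H_2(L_f,\z)_{\mathrm{tor}}\cong\z_2^{2(p-1)}$; the mixed manifolds $kM_\infty\#(\cdots)$ arise from links with $b_2=k>0$, the connected-sum decomposition being read off from the free rank together with the torsion. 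I would present this bookkeeping as a table: one row per target manifold, listing a weight vector, a degree, and the resulting homology, and check in each case that Koll\'ar's torsion constraint (Theorem~\ref{Kol05}) is met.

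Task (c) is the heart of the argument and the step I expect to be the main obstacle. For each weight system one must bound \emph{below} the log canonical threshold of a generic member of $|\o_{\p(\bfw)}(d)|$ — and, in the borderline cases, of a carefully chosen non-generic but still quasi-smooth $f$ — uniformly over all possible centers, and show it exceeds $2/3$. This is carried out by the curve-selection/inductive method of Johnson--Koll\'ar: localize at a point of $X_d$, pass to a weighted affine chart, estimate the multiplicity of $f$ and of its restrictions to coordinate subvarieties, and balance these multiplicities against the weights and the degree; the cases where $X_d$ has isolated orbifold points, where it meets the singular locus of $\p(\bfw)$ in a curve, and where special tangencies occur must each be treated separately, and in some weight systems the only way to clear the $2/3$ threshold is to choose the monomial support of $f$ so as to push the worst singularities up in order. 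This is precisely where the list in Theorem~A stops: for the weight systems one would need to reach further $5$-manifolds the estimate either fails or we have been unable to establish it.

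For part (iii) I would, for each of $2M_3$, $M_\infty\#M_3$, $M_\infty\#2M_3$, and $3M_\infty\#M_3$, exhibit an \emph{infinite} family $\{(\bfw^{(k)},d^{(k)})\}_{k\in\n}$ of weight systems each of whose links is diffeomorphic to the given manifold and each of which satisfies $d^{(k)}<|\bfw^{(k)}|$ together with the $2/3$-estimate — infinite families of positive Sasaki--Seifert structures on these spaces are already available (cf.\ Theorem~10.2.9 of \cite{BG05}), and the new content is to push the $\alpha$-invariant bound through the whole family, which should become uniform for large $k$ since the bad strata recede to high order. To see that the resulting Sasaki--Einstein structures fall into infinitely many distinct deformation classes, one uses a deformation invariant of the Sasaki--Seifert structure — for instance the set of orders of the local uniformizing groups of the leaf-space orbifold $X^{(k)}$ (equivalently the orders of the isotropy circles of the Reeb foliation), which is unchanged under deformations of the Sasakian structure — and observes that it takes infinitely many distinct values along the family, so no two of the constructed structures are deformation equivalent. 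This yields the countable infinity of deformation classes.
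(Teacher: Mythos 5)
Your overall route is the same as the paper's: realize each manifold as the link of a weighted homogeneous polynomial, check the Fano condition $I=|\bfw|-d>0$, verify a klt-type estimate exceeding $\tfrac23$ via the Demailly--Koll\'ar continuity method and Johnson--Koll\'ar multiplicity bounds, and identify the diffeomorphism type through Smale's classification. But there are two genuine gaps. First, your setup takes the leaf space to be a \emph{well-formed} quasi-smooth hypersurface $X_d\subset\p(\bfw)$ and asks for its ordinary log canonical threshold to exceed $\tfrac23$. Every manifold in the statement has nontrivial torsion in $H_2$, and by Koll\'ar's formula (\ref{Koltor}) together with Lemma \ref{torsion}, torsion can only come from non-rational branch divisors, which occur precisely when the weight vector is \emph{not} well-formed; the klt condition must then be taken with respect to the orbifold anticanonical class $-K^{orb}_{\calz_f}=-(K_{\calz_f}+\grD)$, i.e.\ for sections of $\calo(s(|\bfw|-d))$ on the pair $(\calz_f,\grD)$. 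Handling this branch-divisor case is exactly the new technical ingredient of the paper (Lemma \ref{klt5}, which extends the estimates of \cite{BGN03c} beyond the well-formed setting), so the framework as you state it cannot reach any of the listed manifolds.

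Second, the theorem's content is the explicit data. The paper's proof consists of exhibiting concrete weight vectors, degrees and (minimal YY-type) polynomials --- Tables 1--3 --- and then checking, in one line each, the numerical sufficient condition of Lemma \ref{klt5}: $2Id<3w_0w_1$, or the refinements $2Id<3w_0w_2$ (when the line $z_0=z_1=0$ misses $\calz_\bfw$) and $2Id<3w_0w_3$ (when $(0,0,0,1)\notin\calz_\bfw$), together with the torsion identification via branch-divisor genera and Orlik's algorithm. You describe the search procedure but produce no $(\bfw,d,f)$, so none of parts (1)--(3) is actually established. For part (3) your claim that the $\alpha$-invariant bound ``becomes uniform for large $k$'' is only a heuristic; the paper's series, e.g.\ $\bfw=(3,3k,6k-1,9k)$, $d=18k$, $I=2$ giving $2M_3$ and $M_\infty\#2M_3$, fail the crude bound $2Id<3w_0w_1$ for every $k$ and are saved only by condition (iii) of Lemma \ref{klt5}, which requires verifying that $(0,0,0,1)$ does not lie on $\calz_\bfw$. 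Your proposed deformation invariant (the orders of the local uniformizing groups along the Reeb foliation) for separating the infinitely many structures is reasonable and consistent with how the paper distinguishes the Sasaki--Seifert structures in its series.
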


Compared with our earlier work \cite{BGN02a,BGN02b,BGN03c,BG03,BGK05,BGKT05,BG06b}, the proof of this theorem requires two new ingredients, first, Koll\'ar's Theorem \ref{Kol05} \cite{Kol05b}, and second, a generalization of previous estimates used in \cite{BGN03c} to the case when branch divisors are present. The latter is Lemma \ref{klt5} below.

The manifolds in part (3) of Theorem A are the first examples of an infinite number of deformation classes of Sasaki-Einstein metrics on Smale manifolds with torsion. The question as to which Smale manifolds admit positive Sasaki-Seifert structures that can be represented by links of isolated hypersurface singularities by weighted homogeneous polynomials is in itself an interesting one. Indeed, looking at the table in Theorem 10.2.25 of \cite{BG05} we see that all Smale manifolds of the form $kM_\infty\# M_m$ with $k=1,\cdots, 8,$ and $m\geq 12$ admit positive Sasaki-Seifert structures represented by isolated hypersurface singularities by weighted homogeneous polynomials except $5M_\infty\# M_m$. Why this happens is not understood. Furthermore, on the manifolds $6M_\infty\# M_m$, $7M_\infty\# M_m$, $8M_\infty\# M_m$ with $m\geq 12$ there is a unique positive Sasaki-Seifert structure and it can be represented by such a link. Generally, it seems to be not very well understood exactly why and when positive Sasaki-Seifert structures can be represented by links of isolated hypersurface singularities by weighted homogeneous polynomials. It seems quite remarkable that most of the known positive Sasaki-Seifert structures can be represented by such links. Keep in mind, however, that the complete resolution of problem (1) above is still open, and that the present paper treats only 10 of the 19 normal forms obtained by Yau and Yu \cite{YaYu05} for links of weighted homogeneous polynomials, so exactly how effective our method is, remains to be seen. Furthermore, there is a generalization of our methods to links of complete intersections of weighted homogeneous polynomials where almost nothing is known.

For some reason there are gaps appearing in the types of manifolds where our method is successful. For example, $4M_2$ is proving elusive as far as the existence of Sasaki-Einstein metrics. We have found several infinite series of positive Sasaki-Seifert structures on $4M_2,$ including the double infinite series $\bfw=(2l,2(2k+1),(9l-1)(2k+1),9l(2k+1))$ of degree $d=18l(2k+1)$ with $\gcd(l,2k+1)=1.$ The index is $I=2l+2k+1$. No Sasaki-Einstein metrics are known to exist on  these links, and for $2k>4l-1$ or $l>5k+2$ no Sasaki-Einstein metrics can exist by the Lichnerowicz obstruction described in \cite{GMSY06}. Again the exact reason seems elusive.

\newtheorem*{named2}{\bf Theorem B}
\begin{named2}\label{mixedSE}
{\it In addition to the Smale manifolds described in Section 10.2.1 of \cite{BG05} and those of Theorem A,  the following Smale manifolds admit a countable infinity of positive Sasaki-Seifert structures; hence, each such Sasaki-Seifert structure has Sasakian metrics of positive Ricci curvature:
$$M_4,M_6,M_\infty\#M_4,2M_\infty\#4M_2,2M_\infty\#M_3, 3M_\infty\#M_2, 3M_\infty\#3M_2,6M_\infty\#M_2,$$
and $M_\infty\#nM_2$ for each $n\geq 2.$ 
Moreover, $2M_\infty\#nM_2$ admits Sasakian metrics of positive Ricci curvature for each $n>0$.}
\end{named2}

\begin{remark}
From Theorem B and Table B.4.1 of \cite{BG05}, we see that the rational homology 5-spheres $M_m$ for $m=2,\cdots,6$ all admit a countable infinity of positive Sasaki-Seifert structures, whereas, for $m\geq 7$ only finitely many are known. Indeed, from the discussion on page 345 of \cite{BG05} it was anticipated that $M_4$ admits a countable infinity of positive Sasaki-Seifert structures, but not for $M_6.$ Moreover, we now know that $M_7$ admits at least 19 positive Sasaki-Seifert structures, but whether this number is finite or not is still unknown. 
\end{remark}

\begin{ack}
The authors thank Ivan Cheltsov for pointing out a gap in our previous paper as well as his interest in our work.
The first author would like to thank the Courant Institute of Mathematical Sciences of New York University for its hospitality during which this work was finished.
\end{ack}

\section{Sasakian Structures on Links of Isolated Hypersurface
Singularities}

We begin with a very brief summary of the Sasakian geometry of links of isolated
hypersurface singularities defined by weighted homogeneous polynomials.
For more details we refer the reader to \cite{BG05}.
Consider the affine space $\bbc^{n+1}$ together with a weighted
$\bbc^*$-action $\bbc^*_\bfw$ given by $(z_0,\ldots,z_n)\mapsto
(\grl^{w_0}z_0,\ldots,\grl^{w_n}z_n),$ where the {\it weights} $w_j$ are
positive integers. It is convenient to view the weights as the components of a
vector $\bfw\in (\bbz^+)^{n+1},$ and we shall assume that
$\gcd(w_0,\ldots,w_n)=1.$ Let $f$ be a quasi-homogeneous polynomial, that is
$f\in \bbc[z_0,\ldots,z_n]$ and satisfies
$$f(\grl^{w_0}z_0,\ldots,\grl^{w_n}z_n)=\grl^df(z_0,\ldots,z_n),
$$
where $d\in \bbz^+$ is the degree of $f.$ We are interested in the {\it
weighted affine cone} $C_f$ defined by
the equation $f(z_0,\ldots,z_n)=0.$ We shall assume that the origin in
$\bbc^{n+1}$ is an isolated singularity, in fact the only singularity, of
$f.$ Then the {\it link} $L_f$ defined by
$$L_f= C_f\cap S^{2n+1}, $$
where
$$S^{2n+1}=\{(z_0,\ldots,z_n)\in \bbc^{n+1}|\sum_{j=0}^n|z_j|^2=1\}$$
is the unit sphere in $\bbc^{n+1},$ is a smooth manifold of dimension $2n-1.$
Furthermore, by the Milnor Fibration Theorem it is well-known that the link $L_f$ is $(n-2)$-connected.

An important special case of weighted homogeneous polynomials, which was used to great advantage in \cite{BGK05}, is that of the {\it Brieskorn-Pham (BP) polynomials}. These take the form
$$f(z_0,\cdots,z_n)=z_0^{a_0}+\cdots + z_n^{a_n},$$
where the exponents $a_j$ and weights $w_j$ are related by $a_jw_j=d$ for all $j=0,\cdots,n.$ The BP polynomials are the simplest type whose zero set has only an isolated singularity at the origin. There is a classification of weighted homogeneous polynomials in terms of Newton polytopes \cite{Kou76}, and a list of normal forms up to deformation has been given for $n=2$ (3-dimensional links) by Orlik and Wagreich \cite{OrWa71a,OrWa77}, and for $n=3$ (5-dimensional links) by Yau and Yu \cite{YaYu05}. It is always assumed that the polynomial is {\it quasi-smooth}, that is, the hypersurface in $\bbc^{n+1}$ cut out by $f=0$ has only an isolated singularity at the origin. The criterion for quasi-smoothness is given in \cite{Fle00} (see also Theorem 4.6.16 of \cite{BG05}). Since this paper only deals with five dimensional links, we give this case only, viz.: 
\begin{crit}
A hypersurface
$\calz_d\subset\bbc\bbp(w_0,w_1,w_2,w_3)$ is quasi-smooth if and only
if all of the following three conditions hold 
\begin{enumerate}
\item For each $i=0,\ldots,3$ there is a $j$ and a monomial
$z_i^{m_i}z_j\in H^0(\bbc\bbp(\bfw),\calo(d)).$ Here $j=i$ is
possible.

\item If $\gcd(w_i,w_j)>1$ then there is a monomial
$z_i^{b_i}z_j^{b_j}\in H^0(\bbc\bbp(\bfw),\calo(d)).$

\item For every $i,j$ either there is a monomial
$z_i^{b_i}z_j^{b_j}\in H^0(\bbc\bbp(\bfw),\calo(d)),$ or there are
monomials $z_i^{c_i}z_j^{c_j}z_k$ and $z_i^{d_i}z_j^{d_j}z_l\in
H^0(\bbc\bbp(\bfw),\calo(d))$ with $\{k,l\}\neq \{i,j\}.$
\end{enumerate} 

\end{crit}

The list of normal forms given in \cite{YaYu05} consists of 19 types, referred to herein as {\it YY types}. It was reproduced in an appendix of \cite{BG05}, and we also reproduce it here in the appendix for the convenience of the reader. However, only 10 of the 19 types of Yau-Yu links can be written as a linear system of 4 equations in 4 unknowns. These are types I-VII, X,XI, and XIX, and are the ones we treat in this paper.

We say that a polynomial $f'$ is a {\it perturbation} of a polynomial $f$ if a monomial of the form $z_i^{a_i'}z_j$ in $f'$ with $j\neq i$ can be replaced by a monomial of the form $z_i^{a_i}$ to give $f.$ In such a case the polynomials $f$ and $f'$ have the same degree $d$ and weight vector $\bfw.$ It means that the weight $w_i$ of $f'$ divides the degree $d.$ After making all such replacements we say that $f$ {\it minimizes} $f'$ (or that $f$ has {\it minimal} YY type). Once a minimal polynomial is given any additional monomials of the form $z_0^{b_0}z_1^{b_1}z_2^{b_2}z_3^{b_3}$ with $\sum_ib_iw_i=d$ can be added to $f$ to give the most general polynomial with a given weight vector $\bfw$ and degree $d.$ One can then give a local moduli space of such links. In the case that Sasaki-Einstein metrics exist one then gets a local moduli space of Sasaki-Einstein metrics. See Section 5.5.2 of \cite{BG05} for details.

Recall that a Sasakian structure consists of a quadruple $\cals =(\xi,\eta,\Phi,g)$ where $g$ is 
a Riemannian metric, $\xi$ is a unit length Killing vector field, $\eta$ is a contact 1-form such 
that $\xi$ is its Reeb vector field, and $\Phi$ is a $(1,1)$ tensor field which annihilates $\xi$ 
and describes an integrable complex structure on the contact vector bundle $\cald 
=\hbox{ker}~\eta$ with transverse metric $d\eta\circ (\Phi\otimes \BOne).$ 
On $S^{2n+1}$ there is a well-known  ``weighted'' Sasakian structure
$\cals_\bfw=(\xi_\bfw,\eta_\bfw,\Phi_\bfw,g_\bfw)$, where the vector field
$\xi_\bfw$ is the infinitesimal generator of the circle subgroup $S^1_\bfw
\subset \bbc^*_\bfw.$ This Sasakian structure on $S^{2n+1}$ induces a natural
Sasakian structure, also denoted by
$\cals_\bfw$, on the link $L_f.$ We shall often write $L_f=L(\bfw;d)$ to emphasize the dependence on the weight vector and degree. 
The quotient space $\calz_f$ of $S^{2n+1}$ by the circle $S^1_\bfw,$ or
equivalently the space of leaves of the characteristic foliation $\calf_\xi$
of $\cals_\bfw,$  is a projective
algebraic variety with an orbifold structure embedded in the weighted projective $\bbp(\bfw)=
\bbp(w_0,w_1,\cdots,w_n),$ in such a way that there is a commutative diagram
\begin{equation}
\begin{matrix}
 L_f &\ra{2.5}& S^{2n+1}_\bfw& \\
  \decdnar{\pi}&&\decdnar{} &\\
   \calz_f &\ra{2.5} &\bbp(\bfw),&
\end{matrix}
\end{equation}
where the horizontal arrows are Sasakian and K\"ahlerian embeddings,
respectively, and the vertical arrows are principal $S^1$ orbibundles and
orbifold Riemannian submersions.  

We are interested in the set of all Sasakian structures associated to the foliation $\calf_\xi.$ When the foliation is quasi-regular, this is what we  have called a {\it Sasaki-Seifert} structure \cite{BG05}, and is conveniently phrased in terms of the basic cohomology class $[d\eta]_B\in H^{1,1}(\calf_\xi)$ via
\begin{equation}\label{sasset}
{\mathcal S}^+(\calf_\xi)=\{\cals'=(a\xi,\eta',\Phi',g')~|~[d\eta']_B=a^{-1}[d\eta]_B,~ a>0  \}.
\end{equation}
Its subset $\calS(\xi)$ defined by putting $a=1$ in (\ref{sasset}) can also be described in terms of the deformations $\eta\mapsto \eta_t=\eta +\grz_t$ where $\grz_t$ is basic, $\eta_0=\eta,~\grz_0=0$ and $\eta_t\wedge (d\eta_t)^n\neq 0$
for all $t\in [0,1].$ The {\it first basic Chern class} $c_1(\calf_\xi)$ is an important invariant of the space $\calS(\calf_\xi)$, and when  $c_1(\calf_\xi)$ can be represented by a positive definite $(1,1)$-form any Sasakian structure $\cals\in \calS(\calf_\xi)$ is said to be {\it positive}. For the weighted homogeneous links $L_f$ this corresponds to $I=\sum w_i-d>0$, and the quotient $\calz_f$ being a Fano orbifold. 

Suppose now we have  a link $L_f$ with a given
Sasakian structure $(\xi,\eta,\Phi,g)$. When can we find a 1-form $\grz$ such
that the deformed structure $(\xi,\eta+\grz,\Phi',g')$ is Sasaki-Einstein? This is a
Sasakian version of the Calabi problem and its solution is equivalent to
solving the corresponding Calabi problem on the space of leaves $\calz_f$.
Since a \Se manifold necessarily has positive Ricci tensor, its
Sasakian structure is necessarily positive. This also implies that the
K\"ahler structure on the orbifold $\calz_f$ be positive, i.e. $c_1^{orb}(\calz_f)$ can be
represented by a positive definite $(1,1)$ form. In this case there are
well-known obstructions to solving the Calabi problem. In local uniformizing coordinates $\{z_i\}_i$ for the complex orbifold $\calz_f$, one uses the continuity method to find solutions to the Monge-Amp\`ere equations
\begin{equation}\label{MAteqn}
{\rm det}\Bigl(g_{i\bar{j}}+
\frac{\partial^2\varphi}{\partial z_i\partial\bar z_j}\Bigr)
={\rm det}(g_{i\bar{j}})e^{F-t\varphi}
\end{equation}
for all $t\in [0,1]$ subject to the condition

\begin{equation}\label{MAteqn2}
g'_{i\bar{j}}=g_{i\bar{j}}+\frac{\partial^2\varphi}{\partial z_i\partial\bar z_j}>0,
\end{equation}
where $F$ is the {\it discrepancy potential}. See, for example, Section 5.2 of \cite{BG05} and references therein. Generally there are obstructions to solving Equation (\ref{MAteqn}) for all $t.$ What is needed is a uniform bound. The continuity method will produce a K\"ahler-Einstein orbifold metric on $\calz_f,$ hence, a \Se metric on $L_f,$ if there is a
$\gamma>\tfrac{n}{n+1}$ such that for every $s\geq 1$ and for
every holomorphic section $\tau_s\in H^0(\calz_f,
\calo(-sK_{\calz_f}^{orb}))=\calo(s(|\bfw|-d))$ we have $|\tau_s|^{-\frac{\grg}{s}}\in
L^2(\calz_f),$ that is
\begin{equation}\label{l2eqn}
\int  |\tau_s|^{-\frac{2\gamma}{s}}d{\rm vol}_g <\  +\infty\,.
\end{equation}
Here $K_{\calz_f}^{orb}$ denotes the {\it orbifold canonical divisor} which generally differs from the ordinary canonical divisor $K_{\calz_f}$ by a branch divisor. Recall that a {\it branch divisor} $\grD$ is a $\bbq$-divisor of the form $\grD=\sum_i(1-\frac{1}{m_i})D_i$ where $D_i$ are Weil divisors on the underlying algebraic variety of $\calz_f$, and $m_i$ is the {\it ramification index}, that is, the gcd of the orders of the local uniformizing groups $\grG$ of the orbifold $\calz_f$ at all points of $D_i.$

Fundamental to this method of producing K\"ahler-Einstein orbifold metrics is the
work of Demailly and Koll\'ar \cite{DeKo01}. ÊThey establish the existence of K\"ahler-Einstein metrics in our orbifold setting unless a certain analytic multiplier ideal sheaf is non-trivial. ÊThe multiplier ideal sheaves are associated to pluri-subharmonic functions, but there is
a dictionary (see  pp. 176-178 of \cite{Laz04b}, and pp. 157-158 of \cite{BG05} for the orbifold setting)
translating this into algebraic language. ÊIn particular, if the
pluri-subharmonic function $\varphi$ locally defines a divisor $D$ on $\calz_f$ that is invariant under the local uniformizing groups $\grG$,
then triviality  of the analytic multiplier ideal sheaf is equivalent
to a certain algebraic condition known as {\it Kawamata log-terminal (klt)}. More precisely, let $D=\sum_ia_iD_i$ be described by local $\grG$-invariant holomorphic functions $g_i$. Then the function $\varphi_D=\sum_ia_i\log |g_i|$ is
plurisubharmonic and defines a multiplier ideal sheaf
$$\cali(\varphi_D)_{\tU}= \Bigl\{f\in \calo_X(\tU)^{\grG}~|~\int_{\tU}\frac{|f|^2}{\prod
|g_i|^{2a_i}}d{\rm vol}_g<\infty\Bigr\}$$ on the local uniformizing neighborhood $\tU,$ and, hence, a multiplier ideal orbisheaf $\cali(\calz,D)$ on $\calz_f$.

\begin{definition} \label{klt2}
The pair $(\calz,D)$ is {\bf klt} or {\bf Kawamata log-terminal}) if
for each local uniformizing neighborhood $\tU$ there exists a log
resolution of singularities $\mu: X \rightarrow \tU$ such that
$$
K_{X} \equiv_n \mu^*(K^{orb}_{\tU} + \phi^*D) + \sum a_iE_i
$$
with $a_i > -1$ for all $\mu$-exceptional $E_i.$  
\end{definition}

This is equivalent to the non-triviality of the multiplier ideal orbisheaf, that is, the condition $\cali(\calz,D)=\calo_\calz.$
Thus Demailly and Koll\'ar prove the existence of a K\"ahler-Einstein orbifold metric provided all divisors numerically equivalent to an appropriate multiple of $-K_\calz^{orb}$ are klt. When phrased in terms of holomorphic sections this becomes Equation (\ref{l2eqn}).

In Corollary 6.1 of  \cite{BGN03c} the authors gave the following result when the weight vector $\bfw$ is {\it well-formed}, that is the gcd of any $n$ of the weights $w_0,\cdots,w_n$ is one. This corollary  is derived from Corollary 4.1 of
the same paper.  As has been kindly pointed out to the authors
by Ivan Cheltsov, there is a gap in the proof of Corollary 4.1
as the argument sketched there only applies at the singular
points of $\bbp(\bfw)$ and a different technique,
such as the semi-continuity argument of Johnson and Koll\'ar
given in Proposition 11 of \cite{JoKo01a}, is required to deal with
all points on $\calz_f$.  So here we sketch a proof which fills this gap. Moreover, in  \cite{BGN03c} the authors only treated the case when $\bfw$ is well-formed; however, the corrected argument works equally well in the non-well-formed case as long as one replaces the canonical divisor $K_{\calz_f}$ by the orbifold canonical divisor  $K_{\calz_f}^{orb}=K_{\calz_f}+\grD$ where is a branch divisor, see \cite{BGK05} and Chapter 4 of \cite{BG05}.

\begin{lemma}\label{klt5}
Let $L(\bfw,d)$ be a link of a weighted homogeneous hypersurface with weight vector $\bfw=(w_0,w_1,w_2,w_3)$ ordered as $w_0\leq w_1\leq w_2\leq w_3.$ Let $\calz_\bfw$ denote the corresponding projective algebraic orbifold. Furthermore, let $I=|\bfw|-d$ denote the Fano index. Then
\begin{enumerate}
 \item The 5-manifold $L(\bfw,d)$ admits a Sasaki-Einstein metric if $2Id<3w_0w_1.$ 
\item If the line $z_0=z_1=0$ does not lie in $\calz_\bfw$ and the weaker condition $2Id<3w_0w_2$ holds, then $L(\bfw,d)$ admits a Sasaki-Einstein metric.
\item If the point $(0,0,0,1)$ does not lie in $\calz_\bfw$ and the even weaker condition $2Id<3w_0w_3$ holds, then $L(\bfw,d)$ admits a Sasaki-Einstein metric.
\end{enumerate}
\end{lemma}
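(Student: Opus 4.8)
The plan is to verify the Demailly--Koll\'ar criterion $(\ref{l2eqn})$ for the orbifold $\calz_\bfw$. Since $\calz_\bfw$ is a complex surface, the threshold in $(\ref{l2eqn})$ is $\tfrac23$, so it suffices to produce a single $\gamma>\tfrac23$ such that, for every $s\ge1$ and every $\tau_s\in H^0(\calz_\bfw,\calo(sI))$, the pair $(\calz_\bfw,\tfrac{\gamma}{s}D_s)$ is klt in the sense of Definition~\ref{klt2}, where $D_s$ denotes the zero divisor of $\tau_s$. Because $\tfrac1s D_s$ is an effective $\bbq$-divisor with $\tfrac1s D_s\sim_{\bbq}-K^{orb}_{\calz_\bfw}=\calo(I)$, and klt-ness scales, I would first reduce this to the uniform statement
\[
(\calz_\bfw,cD)\ \text{is klt for all}\ c<\tfrac{w_0w_1}{Id}\quad\text{and every effective $\bbq$-divisor}\ D\sim_{\bbq}-K^{orb}_{\calz_\bfw}.
\]
Granting this, the hypothesis $2Id<3w_0w_1$ is exactly the inequality $\tfrac{w_0w_1}{Id}>\tfrac23$, so any $\gamma\in(\tfrac23,\tfrac{w_0w_1}{Id})$ does the job and proves (1); parts (2) and (3) follow from the same reduction with $\tfrac{w_0w_1}{Id}$ replaced by $\tfrac{w_0w_2}{Id}$, respectively $\tfrac{w_0w_3}{Id}$.

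To establish the reduced statement I would check it point by point on $\calz_\bfw$, distinguishing two groups of points. Let $\Sigma\subset\calz_\bfw$ be the finite union of the four coordinate points $P_0,\dots,P_3$ of $\bbp(\bfw)$ with the curves $\{z_i=z_j=0\}\cap\calz_\bfw$; it contains $\sing\calz_\bfw$ as well as the base locus of every linear system $|\calo_{\calz_\bfw}(sI)|$, both being supported on coordinate subspaces of $\bbp(\bfw)$. Away from $\Sigma$ the system $|\calo_{\calz_\bfw}(sI)|$ is base-point free near $p$ for $s\gg0$, so by Bertini a general member is log smooth at $p$ (the part of $\supp\grD$ lying off $\Sigma$ causes no trouble, since $(\calz_\bfw,\grD)$ is itself klt there); the semi-continuity argument of Johnson and Koll\'ar, Proposition~11 of \cite{JoKo01a}, applied on a local uniformizing chart, then upgrades this to: $(\calz_\bfw,\tfrac1s D_s)$ is klt at $p$ for \emph{every} $s$ and $\tau_s$, with threshold $\ge1$. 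This is precisely the ingredient that repairs the gap in \cite{BGN03c}, where only the singular points of $\bbp(\bfw)$---all of which lie in $\Sigma$---were handled.

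On $\Sigma$ I would argue directly from the cyclic-quotient local models furnished by the weighted $\bbc^*_\bfw$-action: near a point of a coordinate stratum, $\calz_\bfw$ is a hypersurface germ in $\bbc^3/\bbz_r$ with $r$ a product of the relevant weights, and $\grD$ is a sum of coordinate germs weighted by the corresponding ramification indices. One bounds the multiplicity of $D$ along, and transverse to, such a stratum by intersecting $D$ with the coordinate curves $C_j=\{z_j=0\}\cap\calz_\bfw$ (when $z_j\nmid f$), using
\[
D\cdot C_j=\frac{Id\,w_j}{w_0w_1w_2w_3}
\]
together with the orbifold values of $\mult_pC_j$, and then converts these multiplicity bounds into a klt threshold by a single weighted blow-up, retaining $\grD$ throughout the adjunction formula. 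Carried through, this should produce the threshold $\tfrac{w_0w_1}{Id}$ in general, the bottleneck being the coordinate line $\{z_0=z_1=0\}$ carrying the two smallest weights. If $\{z_0=z_1=0\}\not\subset\calz_\bfw$, this line meets $\calz_\bfw$ in only finitely many points, at which the curve $C_2$ (which then does not contain the line) is also available, improving the numerator to $w_0w_2$ and giving (2); if moreover $(0,0,0,1)\notin\calz_\bfw$ the remaining extremal point disappears and one is left with $w_0w_3$, giving (3).

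I expect the main obstacle to be the analysis on $\Sigma$: making the local klt-threshold estimates at the coordinate points and lines sharp enough to yield exactly $w_0w_1$ (respectively $w_0w_2$, $w_0w_3$) in the numerator, uniformly in $s$, while carrying the branch divisor $\grD$ correctly through every resolution and adjunction---this branch-divisor book-keeping is the only genuinely new feature compared with the well-formed, unbranched case of \cite{BGN03c}. A secondary technical point will be to confirm that the Johnson--Koll\'ar semi-continuity really does transfer to the orbifold category, i.e. that passing to a local uniformizing cover does not weaken the estimate.
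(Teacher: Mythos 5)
Your overall framework agrees with the paper: the Demailly--Koll\'ar criterion with threshold $\tfrac23$, the reduction to a uniform klt bound $c<\tfrac{w_0w_1}{Id}$, and the observation that $2Id<3w_0w_1$ is exactly $\tfrac{w_0w_1}{Id}>\tfrac23$. The gap is in your treatment of points away from the coordinate locus $\Sigma$, which is precisely the step whose absence in \cite{BGN03c} this lemma is meant to repair. Base-point freeness plus Bertini controls only a \emph{general} member of $|\calo_{\calz_\bfw}(sI)|$, and no semicontinuity statement upgrades this to \emph{every} member: the Demailly--Koll\'ar semicontinuity of singularity exponents goes the wrong way (it transfers good behavior from a special divisor to nearby ones, while special members can be strictly worse than general ones, as a cusp degenerating from smooth curves already shows), and the ``semi-continuity argument of Johnson--Koll\'ar'' is not a statement about members of a linear system at all --- it is upper semicontinuity of \emph{multiplicity} on the affine cone, used to compare an arbitrary point with the vertex. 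Moreover your asserted conclusion, that $(\calz_\bfw,\tfrac1s D_s)$ is klt at every $p\notin\Sigma$ with threshold at least $1$ for every $s$ and every $\tau_s$, is both unjustified and stronger than what the hypotheses can give (the method only yields $\tfrac1s\mult_pD_s\le \tfrac{Id}{w_0w_1}$, hence a threshold $>\tfrac23$, not $\ge 1$); tellingly, your off-$\Sigma$ argument never uses the numerical hypotheses $2Id<3w_0w_j$, so the quantitative input is missing exactly where it is needed.

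What the paper does instead (following the \emph{proof}, not just the statement, of Proposition 11 of \cite{JoKo01a}) is a single global multiplicity bound valid at all points simultaneously, with no division into cases on and off $\Sigma$ and no Bertini: lift to the affine cone $C_f\subset\bbc^4$; since multiplicity is upper semicontinuous and every $\bbc^*_\bfw$-orbit closure contains the vertex, $\mult_p\le\mult_0(C_f)$, so it suffices to bound the multiplicity at the origin; this is done by intersecting with the coordinate hyperplanes $H_0,H_1$, where properness of $H_0\cap H_1\cap\calz_\bfw$ --- this is where the hypotheses on the line $z_0=z_1=0$, respectively the point $(0,0,0,1)$, actually enter, rather than through the availability of the curve $C_2$ at finitely many points --- together with the chart estimate $r_x\cdot\mult_0(\bar F_x)\le w_2$ gives $\mult_0(C_f)\le\deg(\calz_\bfw)\,w_0w_1w_2$, with the weaker bounds in the other cases obtained similarly. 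That uniform multiplicity bound, for every member and every point, is what converts into the klt thresholds $\tfrac{w_0w_j}{Id}$. Your local analysis on the coordinate strata via intersection with the curves $C_j$ is in the spirit of the old argument of \cite{BGN03c} and is acceptable there, but as written the off-$\Sigma$ step would fail; by contrast, the branch-divisor bookkeeping you single out as the main difficulty is comparatively minor --- in the paper it amounts to replacing $K_{\calz_f}$ by $K^{orb}_{\calz_f}=K_{\calz_f}+\grD$ throughout.
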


\begin{proof}
We consider Proposition 11 of \cite{JoKo01a} only in the situation
needed for this paper, namely where $\calz_\bfw \subset  
\bbp(\bfw)$
is a hypersurface in weighted projective three--space.  We will assume
that $\bfw=(w_0,w_1,w_2,w_3)$ satisfies $w_0 \leq w_1 \leq w_2 \leq w_3$ and $\gcd(w_0,w_1,w_2,w_3)=1.$
Let $\phi: C_f\backslash \{0\} \rightarrow \bbp(\bfw)$ be the natural quotient map
and let $Z_i = \phi^{-1}(\calz_f \cap \bbp(\bfw)\backslash\{z_i = 0\})$. 
Proposition 11 of \cite{JoKo01a} states that for each $p \in Z_i$
$$
\mult_p(Z_i) \leq w_3w_2w_1 \deg(\calz_\bfw)
$$
where the degree is computed relative to $\calo_{\bbp(\bfw)}(1)$.
Moreover, as long as $\calz_\bfw$ is not the hyperplane $z_0 = 0$ then there is a
better bound:
$$
\mult_p(Z_i) \leq w_3w_2w_0 \deg(\calz_\bfw)
$$
for all $i$ and all $p \in Z_i$.
Finally (this is the extra content of Corollary 4.1 of \cite{BGN03c}),
if $(0,0,0,1)$ is not contained in $\calz_\bfw$ then for all $i$ and all
$x \in Z_i$
$$
\mult_p(Z_i) \leq w_2w_1w_0 \deg(\calz_\bfw).
$$

One key idea in the proof of Proposition 11 of [JK01] is to use the
affine cone $C_f$ lying over $Z_f$ in order to bound the multiplicity of $Z_f$ at various points.
Since the multiplicity of a variety at a point is an upper semi--continuous
function of the point we deduce that for any point $p \in Z_i$
$$
\mult_0(C_f) \geq \mult_p(Z_i)
$$
and thus the goal is to bound the multiplicity of $C_f$ at
the origin.

There are two steps in the process of bounding $\mult_0(C_f)$.  First
we cut out a zero dimensional cycle $Z$ by successively intersecting $\calz_\bfw$
with the coordinate hyperplanes $H_i = \{z_i = 0\}$.  In general, different
hyperplanes are required for different components of the intersection class
but in our case the entire process works globally.
Lifting the cycle $Z$ to $C_f$, this
corresponds to cutting out the closure $\bar{F}_x$ of each
 fibre $F_x = \phi^{-1}(x)$ lying over a point $x \in Z$, with ramification
index $r_x$ given by the order of the kernel of the action of ${\bf C}^\ast$ 
on $F_x$.  
As to the multiplicity of these fibres at $0$ which we must bound, we have
\begin{eqnarray}
r_x \cdot \mult_0(\bar{F}_x) = w_{\min_{j}\{z_j \neq 0\}}
\label{mult}
\end{eqnarray}
as can be seen by looking at the relevant affine chart in
the definition of $\bbp(\bfw)$ as described in \S 1 of \cite{JoKo01a}.

Suppose, for the moment, that the point $(0,0,0,1)$  does not belong
to $\calz_\bfw$.  We claim that $H_0 \cap H_1 \cap \calz_\bfw$ is a proper intersection.
Since $H_0 \cap H_1 \subset \bbp(\bfw)$  is a line, if the 
intersection were not proper then the entire line $H_0 \cap H_1$ would
be contained in $\calz_\bfw$ and this is not the case since $(0,0,0,1)$ is
not in $\calz_\bfw$.  Note that a weaker hypothesis is sufficient to guarantee
that $H_0 \cap H_1  \cap \calz_\bfw$ is proper but we will use the fact that
$(0,0,0,1)$ is not in $\calz_\bfw$ at the next step.  Let $x$ be one of the
points of intersection of $H_0 \cap H_1 \cap \calz_\bfw$.  Then
\begin{eqnarray}
r_x \cdot \mult_0(\bar{F}_x)  \leq w_2
\label{ub}
\end{eqnarray}
since $x$ must be contained in one of the charts $z_i \neq 0$ for 
$i \leq 2$ and so (\ref{ub}) follows from (\ref{mult}).
Since $\calz_\bfw \cap H_0 \cap H_1$ is a cycle of degree at most $\deg(\calz_\bfw)w_0w_1$ we see
from (\ref{ub}), lifting to $C_f$, that
$$
\mult_0(C_f) \leq \deg(\calz_\bfw)w_0w_1w_2
$$
as desired; indeed, the right hand side is the total degree of an intersection
class, part of which, the left hand side, is supported at $0$.  
The other two inequalities of \cite{JoKo01a} Proposition 11 are obtained
similarly except that in these cases one cannot necessarily
choose $H_0$ and $H_1$ for the intersection and one does not 
necessarily have the inequality (\ref{mult}) and hence the
inequalities obtained are weaker.  This proves the lemma as well as fills the gap in 
Corollary 6.1 of \cite{BGN03c}.

\end{proof}

Note that all BP polynomials satisfy the better estimate (iii) of Lemma \ref{klt5}. However, in \cite{BGK05} a different estimate was used for the case of BP polynomials. These two estimates are somewhat independent. The estimate used in \cite{BGK05} appears to be the better estimate for spheres, but the estimate in (iii) of Lemma \ref{klt5} seems to be the better for Smale manifolds of mixed type as well as some rational homology spheres.

We also remark that many of the Sasaki-Einstein metrics given here occur with `moduli'. Using methods explained in \cite{BG05}, it is straightforward to compute infinitesimal deformations of a Sasaki-Einstein structure on a given link. However, completeness of this infinitesimal deformation space is still unknown, and will be treated elsewhere.

\section{Smale manifolds}

The reason that dimension five is so amenable to analysis stems from Smale's seminal work \cite{Sm62} on the classification of compact simply connected spin 5-manifolds. The non-spin case was later completed by Barden \cite{Bar65}, but the spin case will suffice here. Smale's classification of all closed simply 
connected $5$-manifolds that admit a spin structure is as follows. Any such manifolds must be of the form
\begin{equation}\label{smaleman}
M=kM_\infty\# M_{m_1}\#\cdots \#M_{m_n}
\end{equation} 
where $M_{\infty}= S^2\times S^3$, 
$kM_{\infty}$ is the $k$-fold connected sum of $M_{\infty}$,   
$m_i$ is a positive integer with $m_i$ dividing $m_{i+1}$ and $m_1\geq 1$, 
and where $M_{m}$ is $S^5$ if $m=1$, or a 
$5$-manifold such that $H_2(M_m,{\mathbb Z})={\mathbb Z}/m \oplus 
{\mathbb Z}/m$, otherwise. 
Here $k\in \bbn$ and $k=0$ means that there is
no $M_{\infty}$ factor at all. It will also be convenient to
use the convention $0M_{\infty}=S^5$, which is consistent
with the fact that the sphere is the identity element for the connected
sum operation. 

The understanding of Sasakian geometry in dimension five owes much to the work of Koll\'ar \cite{Kol05b,Kol06a,Kol06}. In particular, he has given a complete characterization of the torsion \cite{Kol05b} for smooth Seifert bundles over a projective cyclic orbifold surface. Explicitly, he has shown that if a Smale manifold $M$ admits a Sasakian structure, then $H_2(M,\bbz)$ must take the form
\begin{equation}\label{Koltor}
H_2(M,\bbz)=\bbz^{k}\oplus\sum_i
(\bbz_{m_i})^{2g(D_i)},
\end{equation}
where $g(D)$ denotes the genus of the branch divisor $D.$ This shows that only non-rational branch divisors contribute to torsion. To determine the Smale manifold we can also use a combinatorial method due to Orlik \cite{Or72} and a computer program written for us by E. Thomas. We refer to Corollary 9.3.13 and Conjecture 9.3.15 of \cite{BG05} for complete details. This conjecture is known to hold in dimension five \cite{BGS07b}.

\section{Torsion and Branch Divisors}

Heretofore, $L_f=L_f(\bfw,d)$ will denote a 5 dimensional link of a weighted homogeneous polynomial $f$ of degree $d$ and weight vector $\bfw=(w_0,w_1,w_2,w_3).$ As mentioned previously such five dimensional links have been classified by Yau and Yu \cite{YaYu05}. They gave a list of 19 normal forms which is reproduced in the Appendix. First we make note of the fact that 10 of the 19 types of Yau-Yu links can be written as a linear system of 4 equations in 4 unknowns. These are types I-VII, X,XI, and XIX. All of these can be written as a matrix equation in the form
\begin{equation}\label{linsys1}
{\bf A}\left( 
\begin{matrix}w_0 \\ w_1 \\ w_2 \\ w_3
\end{matrix}\right) =d\left(
\begin{matrix} 1 \\ 1 \\ 1 \\ 1
\end{matrix}\right),
\end{equation}
where ${\bf A}$ is 4 by 4 matrix depending on the exponents $\bfa.$ We refer to ${\bf A}$ as the {\it exponent matrix}. We assume that $\det~{\bf A}\neq 0,$ in which case we write the solution as $\bfw =d{\bf A}^{-1}\BOne_4.$ We refer to links of this type as {\it standard YY links}. We are interested in 5-manifolds $M^5$ that have non-zero torsion, and by Koll\'ar's result Equation (\ref{Koltor}) above, non-trivial torsion implies there must be branch divisors that are non-rational curves.  We now consider some useful lemmas.

\begin{lemma}\label{torsion}
Let $\calz_\bfw$ be the algebraic orbifold of a link $L_f=L_f(\bfw,d)$. Then any branch divisor $D$ that contributes to torsion in $H_2(L_f,\bbz)$ is a coordinate hypersurface in $\calz_\bfw,$ that is, $D$ is the hypersurface $z_i=0$ for some $i=0,\cdots,3.$ In particular, $\calz_\bfw$ can have at most four branch divisors contributing to torsion.
\end{lemma}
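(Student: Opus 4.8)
The plan is to combine Koll\'ar's structural result (Equation (\ref{Koltor})) with the explicit description of the branch locus of the Seifert fibration $\pi\colon L_f\to\calz_\bfw$. The starting point is that a branch divisor $D_i\subset\calz_\bfw$ contributes torsion $(\bbz_{m_i})^{2g(D_i)}$, so it contributes to $H_2(L_f,\bbz)_{tor}$ precisely when the ramification index $m_i>1$ \emph{and} the genus $g(D_i)>0$; in particular such a $D_i$ is an irreducible curve of positive genus. The key observation I would make is that the ramification index $m_i$ along a divisor $D_i$ equals the gcd of the orders of the local uniformizing groups $\grG$ of the orbifold $\calz_\bfw$ at the generic point of $D_i$, and this orbifold structure on $\calz_\bfw$ is inherited from that of $\bbp(\bfw)=\bbp(w_0,w_1,w_2,w_3)$. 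On $\bbp(\bfw)$ the locus of points with non-trivial isotropy is supported on the coordinate hyperplanes $\{z_i=0\}$: away from all four coordinate hyperplanes the weighted $\bbc^*_\bfw$-action is free (after the gcd normalization $\gcd(w_0,\dots,w_3)=1$), so $\bbp(\bfw)$ is smooth and reduced-orbifold there. Hence any divisor $D$ in $\calz_\bfw$ with $m_D>1$ must be contained in the branch locus of $\bbp(\bfw)$, i.e. in $\bigcup_i\{z_i=0\}\cap\calz_\bfw$.

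The next step is to upgrade ``contained in $\bigcup_i\{z_i=0\}\cap\calz_\bfw$'' to ``equal to some $\{z_i=0\}\cap\calz_\bfw$''. For this I would argue that if $D$ is an irreducible component of $\{z_i=0\}\cap\calz_\bfw$ that is \emph{properly} contained in that intersection divisor, then $D$ meets the lower-dimensional strata $\{z_i=z_j=0\}$, and along $D$ the generic local uniformizing group has order equal to $\gcd$ of the weights indexed by the vanishing coordinates at the generic point of $D$ — which, generically on $\{z_i=0\}\cap\calz_\bfw$, is just $\gcd$ over the single index $i$ of coordinates forced to vanish, hence is determined by $w_i$ alone. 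I would note that $\{z_i=0\}\cap\calz_\bfw$ is, as a whole, the divisor cut out by the single equation $z_i=0$; the orbifold structure of $\calz_\bfw$ along its generic point is governed by a cyclic group of order dividing $w_i$, and one invokes well-formedness/quasi-smoothness (the Criterion quoted above) to conclude that $\{z_i=0\}$ is irreducible in $\calz_\bfw$ and carries a constant ramification index. Thus the only candidate branch divisors are the four coordinate hypersurfaces $z_0=0,\dots,z_3=0$, giving the ``at most four'' bound immediately.

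The step I expect to be the main obstacle is the passage from $\bbp(\bfw)$ to the hypersurface $\calz_\bfw$: a priori $\calz_\bfw$ could acquire extra orbifold/branch behavior along curves where it is tangent to, or singular within, a coordinate stratum, and one must rule out that such a curve — of positive genus — could arise. Here quasi-smoothness is essential: it forces $\calz_\bfw$ to be quasi-smooth, so its orbifold singularities are exactly those induced from the ambient $\bbp(\bfw)$ and no new isotropy is created on $\calz_\bfw$ beyond intersecting with the ambient branch locus. I would also need to be slightly careful about low-genus degenerate cases (a coordinate hyperplane section could be reducible or have components of genus $0$), but those components simply fail to contribute to torsion, consistently with (\ref{Koltor}), so they do not affect the statement. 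Once quasi-smoothness pins down the orbifold structure, the lemma follows, and the ``at most four'' conclusion is then just counting coordinate hyperplanes in $\bbp(w_0,w_1,w_2,w_3)$.
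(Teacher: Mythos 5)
Your argument is correct and is essentially the paper's own proof, which simply combines Koll\'ar's formula (\ref{Koltor}) with the well-formedness discussion in \cite{BG05}: a branch divisor of positive genus forces divisorial isotropy of the weighted action, which occurs only along a coordinate hyperplane $\{z_i=0\}$ with ramification index $d_i=\gcd(w_j: j\neq i)>1$, i.e.\ $\calz_\bfw$ is not well-formed, giving at most four such divisors. Your first paragraph already contains this entire argument; the second paragraph's worry about proper components and extra isotropy is not needed (and is no more detailed than what the paper itself asserts).
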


\begin{proof}
This follows from Koll\'ar's result (\ref{Koltor}) above and the discussion on page 141 of \cite{BG05}. Indeed, if $D$ is a branch divisor contributing to torsion then the algebraic orbifold $\calz_\bfw$ is not well-formed, so some $d_i=\gcd(w_0,\cdots,\hat{w_i},\cdots,w_3)>1$, and $D$ is obtained by setting $z_i=0.$ 
\end{proof}

\begin{lemma}\label{stlem1}
Let $L_f=L_f(\bfw;d)$ be a link such that the polynomial $f$ contains a term of the form $z_j^{a_j}z_i.$ Then $D_i$ is not a branch divisor. 
\end{lemma}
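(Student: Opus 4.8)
The claim is that if $f$ contains a monomial $z_j^{a_j}z_i$ (with $j\ne i$), then the coordinate hyperplane $D_i=\{z_i=0\}$ is \emph{not} a branch divisor of $\calz_\bfw$. By Lemma~\ref{torsion} the only candidates for branch divisors contributing to torsion are the coordinate hypersurfaces $z_i=0$, and such a hypersurface is actually a branch divisor precisely when $d_i:=\gcd(w_0,\dots,\widehat{w_i},\dots,w_3)>1$; so it suffices to show that the presence of the monomial $z_j^{a_j}z_i$ forces $d_i=1$.

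The plan is a short divisibility computation. Since $z_j^{a_j}z_i$ is a monomial of $f$, it has weighted degree $d$, i.e. $a_j w_j + w_i = d$. Now $d_i$ divides every weight $w_k$ with $k\ne i$, and in particular $d_i\mid w_j$, hence $d_i\mid a_j w_j$. On the other hand, because the link is quasi-smooth (Criterion above), condition (1) applied to the index $i$ gives a monomial $z_i^{m_i}z_l\in H^0(\bbp(\bfw),\calo(d))$ for some $l$; combining this with the defining relation $\gcd(w_0,w_1,w_2,w_3)=1$ one sees that $d_i$ must also divide $d$. (Alternatively, and more directly: $d_i\mid w_k$ for all $k\ne i$, so $d_i$ divides the weighted degree of \emph{any} monomial not involving $z_i$, and divides $d$ minus the $z_i$-contribution of any monomial that does involve $z_i$; a standard quasi-smoothness argument then yields $d_i\mid d$.) From $a_j w_j + w_i = d$, $d_i\mid a_j w_j$, and $d_i\mid d$ we conclude $d_i\mid w_i$. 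But then $d_i$ divides all four weights $w_0,w_1,w_2,w_3$, so $d_i\mid \gcd(w_0,w_1,w_2,w_3)=1$, forcing $d_i=1$.

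Having shown $d_i=1$, we invoke Lemma~\ref{torsion}: the hypersurface $z_i=0$ can only be a branch divisor if $\calz_\bfw$ fails to be well-formed along it, i.e. if $d_i>1$. Since $d_i=1$, the local uniformizing groups along $D_i$ are trivial, the ramification index is $1$, and $D_i$ is not a branch divisor. This completes the argument.

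The only delicate point is establishing $d_i\mid d$, i.e. checking that the quasi-smoothness of $f$ rules out the pathological situation in which $d_i$ divides all weights except $w_i$ but not $d$ itself; this is exactly the content of the well-formedness analysis on page~141 of \cite{BG05} used in Lemma~\ref{torsion}, so I would simply cite that discussion rather than reprove it. Everything else is a one-line divisibility manipulation, so I do not expect any real obstacle.
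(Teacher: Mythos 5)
Your proof is correct and follows essentially the same route as the paper: from $a_jw_j+w_i=d$ and the fact that the putative ramification index $m_i=\gcd(w_j:j\neq i)$ divides $d$ and all $w_j$ with $j\neq i$, you deduce $m_i\mid w_i$, contradicting $\gcd(w_0,w_1,w_2,w_3)=1$. The only remark worth making is that the paper simply asserts $m_i\mid d$, while you rightly flag it as the step needing justification; your ``alternative'' justification (some monomial of $f$ omits $z_i$, else $z_i\mid f$ would violate quasi-smoothness) is the correct one, whereas your first suggestion via condition (1) of the quasi-smoothness criterion applied to the index $i$ only yields $m w_i+w_l=d$ and does not by itself give $d_i\mid d$.
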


\begin{proof}
Since the polynomial $f$ contains a term of the form $z_iz_j^{a_j},$ we must have $d=a_jw_j+w_i.$ But if $D_i$ were a branch divisor, we would have $m_i=\gcd(w_j: j\neq i)>1$. But then since $m_i|d$ and all the $w_j$ with $j\neq i,$ it must divide $w_i=d-a_jw_j$ as well. But this is impossible since $\gcd(w_0,w_1,w_2,w_3)=1.$
\end{proof}

Using Lemmas \ref{torsion} and \ref{stlem1} we give a table of the number $N$ of possible branch divisors that can contribute to torsion for each of the standard YY types.
\medskip
\begin{center}
\begin{tabular}{|c||c|c|c|c|c|c|c|c|c|c|}\hline\hline
YY type &I&II&III&IV&V&VI&VII&X&XI&XIX \\ \hline
$N$ &4&3&2&2&1&0&2&1&1&0 \hl
\end{tabular}
\vspace{3mm}\\
\end{center}
Moreover, it follows from Proposition 6.3 of \cite{Kol05b} that a positive Sasakian structure can have at most one non-rational branch divisor.

The following result about 3-dimensional links will prove useful.
\begin{lemma}\label{3dlem}
Any curve $C$ of the form $z_0^{a_0}+z_1^{a_1}+ z_1z_2=0$ or $z_0^{a_0}+z_1z_2=0$ is rational.
\end{lemma}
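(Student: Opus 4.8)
The plan is to produce an explicit birational parametrization of $C$ by a rational curve. Note first that, these being $3$-dimensional links, $C$ lies in the weighted projective plane $\bbp(w_0,w_1,w_2)$, and that quasi-homogeneity forces $w_1+w_2=d=a_0w_0$ (and also $=a_1w_1$ in the first case). The key structural observation is that in both cases the defining polynomial is \emph{linear} in $z_2$: it has the shape $z_1z_2+g(z_0,z_1)=0$, with $g=z_0^{a_0}+z_1^{a_1}$ in the first case and $g=z_0^{a_0}$ in the second, and in either case $z_1\nmid g$.

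First I would consider the projection $\pi\colon C\map \bbp(w_0,w_1)$ sending $[z_0:z_1:z_2]\mapsto[z_0:z_1]$; its only possible point of indeterminacy is $[0:0:1]$, so $\pi$ is a morphism on a dense open subset of $C$ (and is clearly dominant, since every $[z_0:z_1]$ with $z_1\neq 0$ is attained). On the locus $z_1\neq 0$ the relation $z_1z_2+g(z_0,z_1)=0$ determines $z_2$ uniquely as $z_2=-g(z_0,z_1)/z_1$, and a weight check ($g$ is homogeneous of degree $d$, so $g/z_1$ has degree $d-w_1=w_2$) shows that $[z_0:z_1]\mapsto[z_0:z_1:-g(z_0,z_1)/z_1]$ is a well-defined rational map $\bbp(w_0,w_1)\map C$ inverse to $\pi$. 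Hence $\pi$ is birational. (That $C$ is irreducible, so that the word ``rational'' is unambiguous, follows from an Eisenstein argument at the prime $z_1$ in $\bbc[z_0,z_1][z_2]$, using $z_1\nmid g$.) Since a one-dimensional weighted projective space is always isomorphic to $\bbp^1$, the curve $\bbp(w_0,w_1)$ is rational, hence so is $C$. The two cases are handled uniformly, differing only in the choice of $g$.

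Nothing here is genuinely hard; the only step requiring a little care is the bookkeeping in weighted projective space — confirming that $[0:0:1]$ is the sole base point of $\pi$ and that the candidate inverse section carries the correct weighted degree $w_2$ — rather than any real obstacle.
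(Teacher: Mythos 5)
Your proof is correct, but it takes a genuinely different route from the paper's. The paper's argument is a two-line reduction: in both normal forms $d=w_1+w_2$, so the index $|\bfw|-d=w_0$ is positive, and rationality then follows from a general theorem of Orlik (the reference [Orl70]) on weighted homogeneous curves; this applies uniformly to any positive-index weighted homogeneous curve and makes no use of the special shape of the defining polynomial. You instead exploit exactly that special shape --- linearity in $z_2$ with coefficient $z_1$ and $z_1\nmid g$ --- to exhibit an explicit birational inverse $[z_0:z_1]\mapsto[z_0:z_1:-g(z_0,z_1)/z_1]$ to the coordinate projection $C\map\bbp(w_0,w_1)\cong\bbp^1$. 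Your version is self-contained and elementary, needs no positivity hypothesis and no external theorem, and actually produces the parametrization rather than merely certifying genus zero; the trade-off is that it would not extend to positive curves whose equations are not linear in any variable, which Orlik's theorem handles for free. One small repair to your parenthetical irreducibility remark: Eisenstein at the prime $z_1$ applies to $f$ viewed as a polynomial in $z_0$ (monic of degree $a_0$, the remaining coefficient divisible by $z_1$ but not by $z_1^2$), not as a polynomial in $z_2$, whose leading coefficient is $z_1$ itself; alternatively, simply note that a degree-one polynomial $z_1z_2+g$ over the UFD $\bbc[z_0,z_1]$ is irreducible as soon as it is primitive, which is precisely your condition $z_1\nmid g$. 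Either way the claim is true and the rest of your argument stands.
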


\begin{proof}
In both cases we have $d=w_1+w_2.$
So $|\bfw|-d=w_0>0,$ which implies that $C$ is rational by a theorem of Orlik \cite{Or70}.
\end{proof}

Instead of presenting a systematic investigation of all the YY links which would require computer searches, we prefer here to give some methods of ferreting out links with torsion in the hope of providing at least partial solutions to the problems given in the introduction. We also restrict ourselves at this time to the standard YY links. A more systematic approach using computer searches is currently in progress. Here we present two methods which are complementary in the sense that one begins with the case of many (meaning four) branch divisors, namely links of BP polynomials (which are classified in \cite{BG05}) with fixed exponents and relatively high index and changes (a procedure we call {\it jiggling}) the weight vector to obtain a different YY type. The second method begins with a YY type with few branch divisors, and tries to give conditions to guarentee non-trivial torsion. This can also give links with more branch divisors as `degenerate cases'. The minimal YY type is always presented. We illustrate these procedures with examples.

\begin{example}
Consider the BP link of degree $24$ and exponent vector $\bfa=(2,3,8,8)$ given in Table B.4.1 of \cite{BG05}. This gives a positive Sasaki-Seifert structure on the rational homology sphere $3M_3$ which also admits a Sasaki-Einstein metric. Its weight vector is $\bfw=(12,8,3,3)$, and its degree is $d=24.$ We see that $D_2$ is a branch divisor with ramification index $3$ and genus $3.$ Since the index $I=|\bfw|-d=2,$ we can jiggle the weight vector by lowering the weight $w_1$ to $7$ and replacing the monomial $z_1^3$ by $z_1^3g_1(z_2,z_3)$ where $g_1$ is a linear polynomial in $z_2$ and $z_3.$ This gives the new link of YY type II with weight vector $\bfw=(12,7,3,3),$ degree $d=24,$ and index $I=1$ on the Smale manifold $M_\infty\#3M_3.$ Moreover, one sees by Lemma \ref{klt5} that this admits a Sasaki-Einstein metric.
\end{example}

\begin{example}
For the second procedure we consider a YY link of type X.
In this case the normal form is 
$$z_0^{a_0}+z_1^{a_1}z_2+z_2^{a_2}z_3 +z_1z_3^{a_3}.$$
The positivity condition is
$$\frac{1}{a_0} +\frac{a_1a_2+a_1a_3+a_2a_3-a_1-a_2-a_3+3}{a_1a_2a_3+1}>1.$$
The only possible branch divisor is $D_0$ which we want to be a non-rational curve. Notice that if we denote the degree of the divisor $D_0$ by $d'$ with weights $\bfw'=(w_1',w_2',w_3'),$ then a necessary condition for $D_0$ to be non-rational is $d'-|\bfw'|\geq 0.$ Now we write $\bfw=(w_0,mw_1',mw_2',mw_3')$ with $\gcd(w_1',w_2',w_3')=1.$ We also have 
\begin{equation}\label{w'eq}
d=a_0w_0=a_1mw_1'+mw_2'=a_2mw_2'+mw_3'= a_3mw_3'+mw_1'.
\end{equation}
So $m$ divides $d$ and $\gcd(m,w_0)=1$, so we write $d=md'$ and $a_0=ma_0'.$ The general solution for the weight vector $\bfw'$ of the branch divisor $D_0$ is
\begin{equation}\label{genX}
\bfw'=\left(
\begin{matrix} w_1' \\
               w_2' \\
               w_3'
\end{matrix}\right)=
\frac{d'}{1+a_1a_2a_3}
\left(\begin{matrix} 1-a_3+a_2a_3  \\
                     1-a_1+a_1a_3\\
                     1-a_2+a_1a_2
              \end{matrix}\right).
\end{equation}
We see from Equation (\ref{w'eq}) that $\gcd(w_i',d')=1$ and $\gcd(w_i',w_j')=1$ for all $i,j.$ Moreover,  from (\ref{genX}) and the fact that $w_i'$ are relatively prime integers, we must have $qd'=1+a_1a_2a_3$ where $q$ is a positive integer that must divide $b_i:=a_ia_{i+1}-a_{i+1}+1$ where $i\in \{1,2,3\}\mod~3.$
Now using the genus formula (Equation 4.6.5 of \cite{BG05}), and the general solution (\ref{genX}) gives, with the help of Maple,
\begin{equation}\label{Xgenus}
g(D_0)=\frac{1+a_1a_2a_3-d'}{2d'}=\frac{q-1}{2}
\end{equation}
which must be a non-negative integer implying that $q$ must be odd.

We consider the case $\bfa=(2,3,5,a_3)$ which also implies $m=2.$ We look for solutions with $q>1$ and odd, of course. It is now quite easy to satisfy the positivity condition which gives $5\leq a_3\leq 18.$ Now searching for possible $b_i,$ we see that the only solution is for $a_3=8,$ in which case we have $b_1=11,b_2=33$ and $b_3=22$ giving $q=11$. We find $d=22$ and $\bfw=(2,4,6,11)$ which is a perturbation of type VII, and occurs on the manifold $5M_2 $. It satisfies the optimal klt condition of Lemma \ref{klt5}, and thus, admits a Sasaki-Einstein metric. 
\end{example}

\section{Proofs of Theorems A and B}
Given our previous discussion the proofs of the main theorems are now straightforward. We need only exhibit links $L(\bfw,d)$ of isolated hypersurface singularities by weighted homogenous polynomials of degree $d$ and weight vector $\bfw,$ and then check the positivity condition $I=|\bfw|-d>0,$ and the klt estimates of Lemma \ref{klt5}. We list the results in the appendix in the form of tables. Since we have come up short of a complete classification, we do not bother keeping track of the number of Sasaki-Seifert structures that occur when the cardinality is finite. We do, however, record the cases when there are a countable infinity of Sasaki-Seifert structures.

\section{Appendix: Tables}

Here we present tables giving links of isolated hypersurface singularities by weighted homogeneous polynomials. In the tables we list the weight vector $\bfw$, the degree $d$, the Fano index $I=|\bfw|-d$ of the weighted homogeneous polynomial, the manifold, and whether there is a Sasaki-Einstein metric. In the case of the sporadic solution, we also list the exponent vector $\bfa$ of a typical polnomial as well as the Yau-Yu normal form type \cite{YaYu05} (see also Table B.5 of \cite{BG05}). We remark that the exponent vector is not generally an invariant of  the Sasaki-Seifert structure, nor of the link, so we list the $\bfa$ that `minimizes'  the YY type. Moreover, we order the weights as $w_0\leq w_1\leq w_2\leq w_3.$ 

First we exhibit a table of infinite series solutions that admit Sasakian metrics of positive Ricci curvature, and will thus prove Theorem B. Four of the series admit Sasaki-Einstein metrics proving part (3) of Theorem A, while 3 series cannot admit Sasaki-Einstein metrics when $k>4.$ The existence of such metrics is unknown in all other cases. It is interesting that the only series solutions that we found that admit Sasaki-Einstein metrics all involve $M_3.$ Note that here $(a,b)=\gcd(a,b).$

\begin{center}\vbox{\[\begin{array}{|c|c|c|c|c|} \hline  {\rm weight ~vector}~\bfw
& d &I &{\rm manifold} &SE \hl\hline 
(2,2k,k(2n+1),2k(n+1)),~ (k,2)=1 &4k(n+1) &k+2 &M_\infty\#nM_2 &no,~if~ k>4 \hl
(2,4,4n,4n+1) &4(2n+1) &3 &2M_\infty\#nM_2 &? \hl
(4,4k+2,4k+3,2(4k+3)) &4(4k+3) &3 &2M_\infty\#M_2 &? \hl
(4,4k-1,4k,8k) &16k &3  &M_\infty\#M_4  &?\hl
(2,3k+2,4k+2,2(3k+2)),~ (k,2)=1 &4(3k+2) &k+2 & 3M_\infty\# M_2 &no,~if~ k>4\hl
(3,3k+2,3(2k+1),3(3k+2))&6(3k+2) &2 & 3M_\infty\# M_3 &yes \hl 
(6,3(2k+1),4(3k+1),9(2k+1)) &18(2k+1) &4 & M_\infty\#M_3 &yes \hl
(3,3k,6k-1,9k) &18k &2 &M_\infty\#2M_3 &yes \hl
(3,3k-1,3k,3k) &9k &2  &2M_\infty\#M_3 &? \hl
(2,2k+1,2(2k+1),2(3k+1)) &6(2k+1) &1 &6M_\infty\#M_2 &? \hl
(2,2k,4k,6k-1) &12k &1 &2M_\infty\#4M_2 &? \hl
(6,2k,4k,3(2k-1)), ~ (k,3)=1 &12k &3  &2M_\infty\#M_2 &? \hl
(2,2k,2k,4k-1) &8k &1 &3M_\infty\#3M_2 &? \hl
(3,3k,6k-1,9k) &18k &2 &2M_3 &yes \hl
(4,3(2k+1),4(2k+1),4(3k+1)) &12(2k+1) &2k+3  &M_4 &no,~if~ k>4\hl
(6,6k-1,12k,18k) &36k &5  &M_6 &?\hl
\end{array}\]}
\vspace{3mm}
\parbox{4.00in}{\small Table 1. Series solutions giving a countable infinity of positive Sasaki-Seifert structures on manifolds having non-vanishing torsion.}
\end{center}

The next two tables will complete the proof of Theorem A. The first table gives a list of rational homology spheres of the form $nM_2$ that can admit Sasaki-Einstein metrics. None of these rational homology spheres with $n>1$ were known previously to admit such metrics. We also indicate some links where the existence of Sasaki-Einstein metrics is still unknown.

\begin{center}\vbox{\[\begin{array}{|c|c|c|c|c|c|c|} \hline \bfa & \bfw 
& d& I &{\rm manifold} &SE & {\rm YY type} \hl\hline 
(11,5,3,2) &(2,4,6,11) &22 &1 &5M_2 &yes &VII \hl
(7,7,3,2)  &(2,2,4,7)  &14 &1 &6M_2 &yes & II  \hl
(27,7,3,2)  &(4,18,42,63) &126 &1 &3M_2 &yes & II \hl
(14,9,3,2)  &(8,14,42,63) &126 &1 &M_2 &yes & II \hl
(9,7,3,2)  &(4,6,14,21) &42 &3 &3M_2 &? & II \hl
(9,7,3,2)  &(4,6,14,19) &42 &1 &3M_2 &yes & VII \hl
(11,9,3,2) &(18,20,66,99) &198 &5 &M_2 &? & II \hl
(11,9,3,2) &(18,22,60,99) &198 &1 &M_2 &yes & II \hl
(10,9,3,2) &(8,10,30,45) &90 &3 &M_2 &yes & II\hl
(10,9,3,2) &(6,10,30,45) &90 &1 &M_2 &yes & II\hl
(15,7,3,2) &(2,4,10,15) &30 &1 &6M_2 &yes & II \hl
(12,9,3,2) &(4,6,18,27)  &54 &1 &3M_2 &yes & II\hl
(17,4,3,2) &(4,14,26,41) &82 &3 &2M_2 &? &X \hl
(12,4,3,2) &(4,10,18,29) &58 &3 &2M_2 &? &X \hl
(7,4,3,2) &(4,6,10,17) &34 &3 &2M_2 &? &X \hl
(15,5,3,2) &(2,6,8,15) &30 &1 &5M_2 &yes &II \hl
(5,5,5,2) &(2,2,2,5) &10 &1 &6M_2 &yes &I \hl
(6,5,5,2) &(4,6,6,15) &30 &1 &2M_2 &yes &II \hl
(7,5,5,2) &(10,12,14,35) &70 &1 &2M_2 &yes &II\hl
(9,9,3,2) &(2,2,6,9) &18 &1 &7M_2 &yes &I \hl
(21,7,3,2) &(2,6,14,21) &42 &1 &6M_2 &yes &I \hl
(15,9,3,2) &(6,10,30,45) &90 &1 &M_2 &yes &I \hl

\end{array}\]}
\vspace{3mm}
\parbox{4.00in}{\small Table 2. Sporadic examples of rational homology spheres of the form $nM_2$ with $n>1$ that can admit a Sasaki-Einstein metric.} 
\end{center}
\vfil\eject

Finally, we give a table of Smale manifolds of mixed type most of which admit Sasaki-Einstein metrics, thus completing the proof of Theorem A. 

\medskip

\begin{center}\vbox{\[\begin{array}{|c|c|c|c|c|c|c|}
\hline
\bfa & {\bf w} & d &I & {\rm manifold} & SE  &YY type\\
\hline\hline
(5,3,3,3) & (9,10,12,15) & 45 & 1 & M_\infty\#M_3 & yes & IV \hl
(8,4,3,2) & (3,6,7,9) & 24 & 1 & M_\infty\#2M_3 & yes & IV \hl
(9,4,3,2) & (2,4,6,7) & 18 & 1 & 2M_\infty\#3M_2 & ? & VII \hl
(10,4,3,2) & (4,10,12,15) & 40 & 1 & 3M_\infty\#M_2 & yes & IV \hl
(6,4,4,2) & (8,9,12,20) & 48 &1 & M_\infty\#M_4 & yes &IV \\
\hline
(16,5,3,2)&(3,9,13,24) &  48 &1 &M_\infty\#2M_3 &yes &VII \hl
(6,4,2,2) & (4,6,9,10) & 24 &5 & M_\infty\#M_2 & ? &IV \\
\hline
(6,4,4,2) & (4,6,6,9) & 24 &1 & M_\infty\#M_2 & yes &IV\\
\hline
(6,5,3,2) & (10,12,16,25) & 60 &3 & M_\infty\#M_2 &? &IV\\
\hline
(12,5,3,2) & (10,24,32,55) & 120 &1 & M_\infty\#M_2 & yes &IV\\
\hline
(6,4,3,3) &(4,5,8,8) &24 & 1 &2M_\infty\#M_4 &yes &II\hl
(8,6,4,2) &(6,7,12,24)  &48  &1 &M_\infty\#M_6 &yes &II  \hl
(12,4,4,2)&(4,9,12,24) &48 &1 &M_\infty\#M_4 &yes &II \hl
(7,6,4,2) &(6,7,9,21)  &42 &1 &2M_\infty\#M_3 &yes &II \hl 
(6,6,4,2) &(4,4,5,12)  &24 &1 &M_\infty\#2M_4 &yes &II\hl
(12,5,4,2) &(5,11,15,30) &60 &1 &M_\infty\#M_5 &yes &II \hl
(12,7,3,2) &(7,8,28,42) &84 &1 &M_\infty\#M_7 &yes  &II \hl
(10,8,3,2) &(4,5,12,20)  &40  &1  &3M_\infty\#M_4 &yes &II  \hl
(8,8,3,2) &(3,3,7,12)  &24 &1 &M_\infty\#3M_3 &yes &II \hl
(21,6,3,2)&(2,7,14,20) &42 &1 &6M_\infty\#M_2 &? &II \hl
(8,4,4,2)&(2,3,4,8) &16 &1 &5M_\infty\#M_2 &yes &II \hl
(6,6,4,2) &(2,2,3,6) &12 &1 &5M_\infty\#2M_2 &yes &I \hl
(12,8,3,2) &(2,3,8,12) &24 &1 &6M_\infty\#M_2 &yes &I\hl
(10,5,4,2) &(2,4,5,10) &20 &1 &4M_\infty\#2M_2 &yes &I \hl
(12,9,3,2) &(3,4,12,18) &36 &1 &4M_\infty\#M_3 &yes &I\hl
(8,6,4,2) &(3,4,6,12) &24 &1 &3M_\infty\#M_3 &yes &I \hl
(12,10,3,2) &(5,6,20,30) &60 &1 &2M_\infty\#M_5 &yes &I \hl
(18,8,3,2) &(4,9,24,36) &72 &1 &2M_\infty\#M_4 &yes &I \hl
(9,6,4,2) &(4,6,9,18) &36 &1 &2M_\infty\#M_2 &yes &I \hl
(12,6,3,2)&(3,6,10,18) &36 &1&M_\infty\#2M_3 &yes &II \hl
(12,6,3,2)&(3,6,11,18) &36 &2&M_\infty\#2M_3 &yes &II \hl
(6,6,3,2) &(3,3,5,9) &18 &2 &M_\infty\#2M_3 &yes &II \hl
(20,5,4,2)&(3,12,16,30)&60 &1&M_\infty\#2M_3 &yes &II \hl
(8,5,4,2)&(5,7,10,20) &40 &2 &M_\infty\#M_5 &yes &II \hl
(8,5,4,2)&(5,6,10,20) &40 &1 &M_\infty\#M_5 &yes &II \hl
\end{array}\]}
\vspace{3mm}
\parbox{4.00in}{\small Table 3. Some new sporadic examples of simply connected 5-dimensional manifolds of mixed
type which can admit Sasaki-Einstein metrics.}

\end{center}
\vfil\eject

\begin{center}
\begin{tabular}{|l|c|c|}\hline
\multicolumn{3}{|c|}{{\bf Table}: The Yau-Yu Links in  Dimensions 5 }\\
\hline
{\bf }Type & $f(z_0,z_1,z_2,z_3)$ & $|\bfw|/{\rm degree}(f)$\\
\hline\hline
{\bf I} (BP)& $z^{a}_0+z^{b}_1+z^{c}_2+z^{d}_3$&
$\frac{1}{a}+\frac{1}{b}+\frac{1}{c}+\frac{1}{d}$\\
\hline
{\bf II} & $z^{a}_0+z^{b}_1+z^{c}_2+z_2z^{d}_3$ &
$\frac{1}{a}+\frac{1}{b}+\frac{1}{c}+\frac{c-1}{cd}$\\
\hline
{\bf III} & $z^{a}_0+z^{b}_1+z^{c}_2z_3+z_2z^{d}_3$ &
$\frac{1}{a}+\frac{1}{b}+\frac{d-1}{cd-1}+\frac{c-1}{cd-1}$\\
\hline
{\bf IV} & $z^{a}_0+z_0z^{b}_1+z_2^{c}+z_2z^d_3$ &
$\frac{1}{a}+\frac{a-1}{ab}+\frac{1}{c}+\frac{c-1}{cd}$\\
\hline
{\bf V}& $z^{a}_0z_1+z_0z^{b}_1+z^{c}_2+z_2z_3^{d}$ &
$\frac{b-1}{ab-1}+\frac{a-1}{ab-1}+\frac{1}{c}+\frac{c-1}{cd}$\\
\hline
{\bf VI} &  $z^{a}_0z_1+z_0z^{b}_1+z^{c}_2z_3+z_2z_3^{d}$&
$\frac{b-1}{ab-1}+\frac{a-1}{ab-1}+\frac{d-1}{cd-1}+\frac{c-1}{dc}$\\
\hline
{\bf VII} & $z^{a}_0+z^{b}_1+z_1z^{c}_2+z_2z_3^{d}$&
$\frac{1}{a}+\frac{1}{b}+\frac{b-1}{bc}+\frac{[b(c-1)+1]}{bcd}$\\
\hline
{\bf VIII}& $z^{a}_0+z^{b}_1+z_1z^{c}_2+z_1z_3^{d}+z_2^pz_3^q$ &
$\frac{1}{a}+\frac{1}{b}+\frac{b-1}{bc}+\frac{b-1}{bd}$\\
& $\frac{p(b-1)}{bc}+\frac{q(b-1)}{bd}=1$& \\
\hline
{\bf IX} & $z^{a}_0+z^{b}_1z_3+z^{c}_2z_3+z_1z_3^{d}+z^p_1z^q_2$ &
$\frac{1}{a}+\frac{(d-1)}{bd-1}+\frac{b(d-1)}{c(bd-1)}+\frac{b-1}{bd-1}$\\
& $\frac{p(d-1)}{bd-1}+\frac{qb(d-1)}{c(bd-1)}=1$& \\
\hline {\bf X} & $z^{a}_0+z^{b}_1z_2+z^{c}_2z_3+z_1z_3^{d}$ &
$\frac{1}{a}+\frac{[d(c-1)+1]}{bcd+1}+\frac{[b(d-1)+1]}{bcd+1}$\\
& & $+\frac{[c(b-1)+1]}{bcd+1}$\\ \hline {\bf XI}
&$z^{a}_0+z_0z^{b}_1+z_1z^{c}_2+z_2z_3^{d}$ &
$\frac{1}{a}+\frac{a-1}{ab}+\frac{[a(b-1)+1]}{abc}$\\
& & $+\frac{[ab(c-1)+(a-1)]}{abcd}$\\ \hline {\bf XII}&
$z^{a}_0+z_0z^{b}_1+z_0z^{c}_2+z_1z_3^{d}+z^p_1z^q_2$ &
$\frac{1}{a}+\frac{a-1}{ab}+\frac{a-1}{ac}+\frac{[a(b-1)+1]}{abd}$\\
&$\frac{p(a-1)}{ab}+\frac{q(a-1)}{ac}=1$ &\\ \hline
{\bf XIII} & $z^{a}_0+z_0z^{b}_1+z_1z^{c}_2+z_1z_3^{d}+z^p_2z^q_3$ &
$\frac{1}{a}+\frac{a-1}{ab}+\frac{a-1}{ac}+\frac{[a(b-1)+1]}{abd}$\\
& $\frac{p[a(b-1)+1]}{abc}+\frac{q[a(b-1)+1]}{abd}=1$ & \\
\hline {\bf XIV} &
$z^{a}_0+z_0z^{b}_1+z_0z^{c}_2+z_0z_3^{d}+z^p_1z^q_2+z_2^rz_3^s$&
$\frac{1}{a}+\frac{a-1}{ab}+\frac{a-1}{ac}+\frac{a-1}{ad}$\\
&$\frac{p(a-1)}{ab}+\frac{q(a-1)}{ac}=1=\frac{r(a-1)}{ac}+\frac{s(a-1)}{ad}$ & \\
\hline {\bf XV}&
$z^{a}_0z_1+z_0z^{b}_1+z_0z^{c}_2+z_2z_3^{d}+z^p_1z^q_2$ &
$\frac{b-1}{ab-1}+\frac{a-1}{ab-1}+\frac{b(a-1)}{c(ab-1)}$\\
& $\frac{p(a-1)}{ab-1}+\frac{qb(a-1)}{c(ab-1)}=1$& $+\frac{[c(ab-1)-b(a-1)]}{cd(ab-1)}$\\
\hline {\bf XVI}&
$z^{a}_0z_1+z_0z^{b}_1+z_0z^{c}_2+z_0z_3^{d}+z^p_1z^q_2+z^r_2z^s_3$
& $\frac{(b-1)}{ab-1}+\frac{(a-1)}{ab-1}+\frac{b(a-1)}{c(ab-1)}
+\frac{b(a-1)}{d(ab-1)}$\\
&$\frac{p(a-1)}{ab-1}+\frac{qb(a-1)}{c(ab-1)}=1=\frac{r(a-1)}{ac}+\frac{s(a-1)}{ad}$ & \\
\hline {\bf XVII} &
$z^{a}_0z_1+z_0z^{b}_1+z_1z^{c}_2+z_0z_3^{d}+z^p_1z^q_3+z^r_0z^s_2$
& $\frac{b-1}{ab-1}+\frac{a-1}{ab-1}+\frac{a(b-1)}{c(ab-1)}
+\frac{b(a-1)}{d(ab-1)}$\\
&$\frac{p(a-1)}{ab-1}+\frac{qb(a-1)}{d(ab-1)}=1=\frac{r(b-1)}{ab-1}+\frac{sa(b-1)}{c(ab-1)}$ & \\
\hline {\bf XVIII}
&$z^{a}_0z_2+z_0z^{b}_1+z_1z^{c}_2+z_1z_3^{d}+z^p_2z^q_3$&
$\frac{[b(c-1)+1]}{abc+1}+\frac{[c(a-1)+1]}{abc+1}$\\
&$\frac{p[a(b-1)+1]}{abc+1}+ \frac{qc[a(b-1)+1]}{d(abc+1)}=1$ &
$+\frac{[a(b-1)+1]}{c(abc+1)}+\frac{c[a(b-1)+1]}{d(abc+1)}$\\
\hline {\bf XIX}& $z^{a}_0z_3+z_0z^{b}_1+z^{c}_2z_3+z_2z_3^{d}$ &
$\frac{[b(d(c-1)+1)-1]}{abcd-1}+\frac{[d(c(a-1)+1)-1]}{abcd-1}$\\
& &$+\frac{[a(b(d-1)+1)-1]}{abcd-1}+\frac{[c(a(b-1)+1)-1]}{abcd-1}$\\
\hline
\end{tabular}
\end{center}

\def\cprime{$'$} \def\cprime{$'$} \def\cprime{$'$} \def\cprime{$'$}
  \def\cprime{$'$} \def\cprime{$'$} \def\cprime{$'$} \def\cprime{$'$}
  \def\cdprime{$''$}
\providecommand{\bysame}{\leavevmode\hbox to3em{\hrulefill}\thinspace}
\providecommand{\MR}{\relax\ifhmode\unskip\space\fi MR }
\providecommand{\MRhref}[2]{%
  \href{http://www.ams.org/mathscinet-getitem?mr=#1}{#2}
}
\providecommand{\href}[2]{#2}

\end{document}